\title{Knobbly but nice}
\author{Neil Dobbs}
\address{School of Mathematics and Statistics, UCD, Belfield, Dublin 14, Ireland.}
\email{neil.dobbs@ucd.ie}
\newcommand\re{{\Re}}
\newcommand\im{{\Im}}
\newcommand\eps{\varepsilon}
\DeclareMathOperator{\dist}{dist}
\newcommand\ccc{\mathbb{C}}
\newcommand\R{\mathbb{R}}
\newcommand\Z{\mathbb{Z}}
\newcommand\C{\mathbb{C}}
\newcommand\N{\mathbb{N}}
\newcommand\bbS{\mathbb{S}}
\newtheorem{mainthmnum}{Theorem}
\newtheorem{thm}{Theorem}[section]
\newtheorem{dfn}[thm]{Definition}
\newtheorem{conj}[thm]{Conjecture}
\newtheorem{lem}[thm]{Lemma}
\newtheorem{prop}[thm]{Proposition}
\newtheorem{cor}[thm]{Corollary}
\newtheorem*{cornonum}{Corollary}
\newtheorem{rem}[thm]{Remark}
\newtheorem{example}[thm]{Example}
\numberwithin{equation}{section}
\tikzset{negated/.style={
        decoration={markings,
            mark= at position 0.5 with {
                \node[transform shape] (tempnode) {$\backslash$};
            }
        },
        postaction={decorate}
    }
}
\newcommand\cB{\mathcal{B}}
\newcommand\cK{\mathcal{K}}
\newcommand\cC{{\mathcal C}}
\newcommand\rmthm[1]{{Theorem~\ref{mthm#1}}}
\newcommand\rprop[1]{{Proposition~\ref{prop#1}}}
\newcommand\rlem[1]{{Lemma~\ref{lem#1}}}
\newcommand\rcor[1]{{Corollary~\ref{cor#1}}}
\begin{document}
\begin{abstract}
Our main result states that, under an exponential map whose Julia set is the whole complex plane, on each piecewise smooth Jordan curve there is a point whose orbit is dense. 
    This has consequences for the boundaries of \emph{nice} sets, used in induction methods to study ergodic and geometric properties of the dynamics. 
\end{abstract}
    
\maketitle

\section{Introduction}

%
We consider dynamics of exponential maps, work in the complex plane and show the following.
\begin{mainthmnum} \label{mthmPoint}
    Let $f : z \mapsto \lambda e^z$. If  the Julia set of $f$ is the whole complex plane, on each  piecewise-smooth Jordan curve there is a point whose orbit is dense. 
\end{mainthmnum}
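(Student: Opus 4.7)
The plan is to combine a Baire-category reduction on $\gamma$ with a dynamical key lemma about smooth arcs.

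\emph{Reduction step.} Fix a countable basis $\{U_k\}_{k \in \mathbb{N}}$ for the topology of $\mathbb{C}$, and for each $k$ set $E_k := \{z \in \mathbb{C} : f^n(z) \notin U_k \text{ for all } n \geq 0\}$. Each $E_k$ is closed, and the set of points whose forward orbit is not dense equals $\bigcup_k E_k$. Suppose for contradiction that no point of $\gamma$ has dense orbit, so $\gamma \subseteq \bigcup_k E_k$. The compact metric space $\gamma$ is Baire, so some $E_k \cap \gamma$ has non-empty relative interior; by piecewise smoothness of $\gamma$, this interior contains a smooth sub-arc $B \subseteq \gamma$. Setting $U := U_k$, the whole theorem reduces to the following key lemma: \emph{for every non-degenerate smooth arc $B \subset \mathbb{C}$ and every non-empty open set $U \subseteq \mathbb{C}$, there exist $z \in B$ and $n \geq 0$ with $f^n(z) \in U$.}

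\emph{Approach to the key lemma.} The starting point is that $f(z) = \lambda e^z$ has no critical points, so every iterate $f^n$ is locally biholomorphic and univalent inverse branches exist over any simply connected subdomain of $\mathbb{C}^*$. Since $J(f) = \mathbb{C}$, Montel's theorem gives $\bigcup_n f^n(D) \supseteq \mathbb{C}^*$ for any open disk $D$; equivalently, $\bigcup_n f^{-n}(w)$ is dense in $\mathbb{C}$ for any $w \neq 0$. Picking a repelling periodic point $p \in U$ (these are dense in $J(f) = \mathbb{C}$), I would pull back a small disk $D_p \subset U$ along inverse branches of $f^{nq}$, where $q$ is the period; by Koebe distortion these pullbacks form a family of topological disks of bounded eccentricity, with diameters shrinking exponentially, accumulating densely in $\mathbb{C}$. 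Fixing $z_0 \in B$, a subsequence of such pullbacks $\Delta_j$ accumulates on $z_0$, and each $\Delta_j$ must be disjoint from $B$ by the hypothesis $f^{n_jq}(B) \cap U = \emptyset$. The plan is to turn this into a contradiction with the smooth structure of $B$ at $z_0$.

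\emph{Main obstacle.} The hard step is converting the $2$-dimensional accumulation of pullbacks $\Delta_j$ on $z_0$ into an intersection with the $1$-dimensional arc $B$: a priori all $\Delta_j$ could sit on one side of the tangent line of $B$ at $z_0$, and a smooth arc can lie entirely on the boundary of a dense open set without meeting it. The crux is therefore a quantitative argument showing that the pullbacks $\Delta_j$ approach $z_0$ from \emph{all} tangent directions. In exponential dynamics, the argument of $(f^{n})'(z_0)$ evolves by adding $\arg\lambda + \mathrm{Im}(f^{k}(z_0))$ at each step, so control of this sum modulo $2\pi$ along the orbit of $z_0$ governs how the disks $\Delta_j$ rotate around $z_0$. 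Combined with the Koebe-uniform shapes of the $\Delta_j$, an equidistribution (or at least density) of these angles would eventually place some $\Delta_j$ across $B$, forcing $\Delta_j \cap B \neq \emptyset$. Delivering this last step for the specific $z_0 \in B$ --- rather than for a generic point of $\mathbb{C}$ --- and in the absence of hypotheses on the orbit of $z_0$, is where the bulk of the work will lie, and where the no-critical-point feature of exponentials is decisive.
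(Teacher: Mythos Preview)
Your Baire-category reduction to the key lemma (every non-degenerate smooth arc has dense forward orbit, as a set) is valid, and is essentially equivalent to the paper's nested-closed-sets construction; in either framework the content lies entirely in that key lemma.

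The genuine gap is in your approach to the key lemma. You propose to control the direction from which the pullbacks $\Delta_j$ approach a fixed $z_0\in B$ via $\arg\bigl((f^{n})'(z_0)\bigr)$. But that quantity governs the orientation of \emph{forward} images of small neighbourhoods of $z_0$; it says nothing about pullbacks landing \emph{near} $z_0$, since $z_0\notin\Delta_j$ and the orientation of $\Delta_j$ is determined by $(f^{n_jq})'(w_j)$ for some $w_j\in\Delta_j$ whose forward orbit runs to $p$, not along the orbit of $z_0$. To salvage the idea you would need to show that, among all inverse branches of high iterates landing near $z_0$, the arguments of the derivatives are dense modulo~$\pi$; this is not easier than the original statement, and no equidistribution principle for an \emph{arbitrary} $z_0$ is available. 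The paper sidesteps this by working forwards rather than backwards: it first pushes a sub-arc of $H$ into a right half-strip where it meets the Cantor family $T$ of fast-escaping hairs, and then proves the decisive angle-density proposition, namely that under iteration the arc crosses $T$ transversally at a \emph{dense} set of angles in $[0,\pi]$. The mechanism is concrete: once $f^k\circ\gamma$ exits $S$ through a horizontal edge, $f^{k+1}\circ\gamma$ sweeps through an interval of arguments of length $\pi/2$ while crossing many $2\pi i$-translates of $T$, generating a spread of new intersection angles. With all angles available, a further iterate of a sub-arc can be steered to $\cC^1$-approximate any prescribed circular arc, and four such approximations assemble into a small Jordan curve around any target point. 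So an ``all directions'' statement is indeed the heart of the matter, but the paper obtains it for forward images of the arc against a fixed reference lamination $T$, where the exponential geometry gives a trackable mechanism; your pullback-rotation plan does not supply a comparable mechanism, and as written the argument does not close.
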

If the Julia set is not the whole plane, no point has a dense orbit. 
\begin{mainthmnum} \label{mthmDicho}
    Let $f : z \mapsto \lambda e^z$. If a Jordan curve surrounds a point of the Julia set, one of the following hold:
    \begin{enumerate}
        \item
            The orbit of the Jordan curve is dense.
        \item
            There is an uncountable set of points on the Jordan curve where the curve is non-smooth.
    \end{enumerate}
\end{mainthmnum}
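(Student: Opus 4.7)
My plan is to argue by contrapositive — assume the curve $\gamma$ has at most countably many non-smooth points and deduce that its orbit is dense — and split on whether $J(f)=\mathbb{C}$. When $J(f)=\mathbb{C}$ the Main Theorem is almost directly applicable: a curve with countably many non-smooth points still contains a genuine smooth sub-arc $\alpha$. My first step would be to inspect the proof of the Main Theorem and verify that it in fact produces a point of dense orbit on \emph{any} smooth sub-arc, since the key inputs (distortion control and spreading of transverse directions) are local. If the proof as stated really needs closedness of the curve, the fallback is to close $\alpha$ off by a short auxiliary smooth arc $\beta$ joining its endpoints, apply the Main Theorem to the resulting piecewise-smooth Jordan curve $\alpha\cup\beta$, and argue by varying $\beta$ that the dense-orbit point can always be chosen on $\alpha$.

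When $J(f)\neq \mathbb{C}$, $f$ has an attracting or parabolic cycle, $F(f)$ is its basin, and $J(f)$ is a Cantor bouquet of hairs running from endpoints out to infinity. Alternative~(1) cannot hold in this regime: points in $F$ converge to the cycle while points in $J$ remain trapped in the nowhere-dense set $J$, so $\bigcup_n f^n(\gamma)$ misses an open set. Hence I must establish~(2) directly. The first observation is that $\gamma$ must actually cross $J$: the surrounded Julia point $z_0$ sits on a hair $h_0$ which extends to infinity, and $h_0$ (being in $J$) must meet $\gamma$. The second observation, using the nested-hair structure of the bouquet, is that once $\gamma$ crosses one hair near $h_0$ it is forced to cross uncountably many.

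The main obstacle — the step I expect to require the real work — is converting uncountably many crossings with hairs into uncountably many non-smooth points of $\gamma$. Topologically this is false: in the model bouquet $C\times[0,\infty)$, a straight segment meets the bouquet smoothly in uncountably many points. The argument must therefore invoke the dynamics, and my best guess is to exploit that $Df$ rigidly transports $C^1$ tangent directions under iteration, and to combine this with the combinatorial wildness of how hairs are permuted by $f$ near the singular value, so that at all but countably many crossings a $C^1$ tangent at $\gamma$ is incompatible with the direction of the hair being crossed once iterated forward.
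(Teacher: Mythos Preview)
The proposal contains a fundamental error in the case $J(f) \ne \C$. You assert that alternative~(1) cannot hold because points of $\gamma$ in the Fatou set converge to the cycle while points in $J$ stay in the nowhere-dense set $J$. But Theorem~2 concerns the orbit of the \emph{curve}, namely the set $\bigcup_{n\ge 0} f^n(\gamma)$, not the orbits of individual points. Each point of $\gamma \cap F$ does eventually approach the cycle, but points close to $J$ take arbitrarily long to do so and can wander through any prescribed open set on the way; there is no single open set missed by the whole union. Concretely, take a smooth round circle surrounding a Julia point: it has \emph{no} non-smooth points, so~(2) fails and Theorem~2 forces~(1). You are therefore attempting to establish~(2) directly in a situation where it is simply false, and your ``best guess'' argument about incompatible tangent directions could never be completed.

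The paper's proof is uniform in $\lambda$ and does not split on whether $J = \C$. One first shows (\rlem{Mont}), via Montel applied to a repelling periodic point in the interior region, that some iterate $f^N(H)$ meets the Cantor family $T$ of fast-escaping hairs in uncountably many components, all inside a short sub-arc $A \subset H$. Either $H$ fails to be smooth at uncountably many of these crossing points (giving~(2)), or one crossing yields a $C^1$ arc $\rho \subset H$ with $\{0\}$ a connected component of $(f^N\circ\rho)^{-1}(T)$ (\rcor{Dich}). The machinery built for Theorem~1 --- transversality spreading (\rprop{Transv}) and circle approximation (\rlem{CircleApprox}, \rprop{curvearcs}) --- is stated for \emph{arcs}, not closed curves, so it applies directly to $f^N\circ\rho$ and shows that iterates of $\rho$ visit every non-zero point; hence the orbit of $H$ is dense. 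Your ``inspect the proof and localise to a smooth sub-arc'' instinct in the $J=\C$ case is thus correct in spirit and is essentially what the paper does, but one also needs \rlem{Mont} to place the sub-arc against $T$ in the first place. Your fallback of closing $\alpha$ with an auxiliary arc $\beta$ and varying $\beta$ does not work as stated: Theorem~1 guarantees only existence, and the dense-orbit point could land on each successive $\beta$ in turn.
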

\begin{dfn}
A \emph{nice set} is an open set $U$ such that $f^n(\partial U) \cap U = \emptyset$ for all $n\geq0$. 
\end{dfn}
\begin{cornonum}
    No bounded  nice set with piecewise-smooth boundary can intersect the Julia set of $f : z \mapsto \lambda e^z$. 
\end{cornonum}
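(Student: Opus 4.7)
The plan is to argue by contradiction, using Theorem \ref{mthmDicho} as the main input. Suppose $U$ is a bounded nice set with piecewise-smooth boundary that meets $J(f)$. Because $U$ is bounded with piecewise-smooth boundary, $\partial U$ is a finite disjoint union of piecewise-smooth Jordan curves. Fix a point $p \in U \cap J(f)$; since $U$ is bounded, a standard topological observation (take the outer boundary of the connected component of $U$ containing $p$, or equivalently a component of $\partial U$ having nonzero winding number around $p$) produces a component $\gamma$ of $\partial U$ which is a piecewise-smooth Jordan curve surrounding $p$.

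Now I would apply \rthm{Dicho} to $\gamma$, which surrounds the Julia point $p$. In alternative (2), $\gamma$ is non-smooth at uncountably many points, which is incompatible with $\gamma$ being piecewise-smooth (the latter has only finitely many non-smooth points by definition). In alternative (1), the orbit $\bigcup_{n\geq 0} f^n(\gamma)$ is dense. Since $U$ is a nonempty open set, density forces $f^n(\gamma) \cap U \neq \emptyset$ for some $n\geq 0$; but $\gamma \subset \partial U$, so this contradicts the niceness condition $f^n(\partial U) \cap U = \emptyset$. Either horn of the dichotomy yields a contradiction, so no such $U$ can exist.

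Most of the work has already been done in \rthm{Dicho}; the corollary is essentially a matter of packaging. The only genuinely new step is the topological one — extracting from $\partial U$ a Jordan component that encircles a point of $U \cap J(f)$ — which I expect to be routine given the piecewise-smooth hypothesis. A minor thing to double-check is the precise meaning of ``dense'' in \rthm{Dicho}: if it means dense in $\mathbb{C}$ the argument above is immediate, and if it means only dense in $J(f)$ the same conclusion follows because $U \cap J(f)$ is a nonempty relatively open subset of $J(f)$ and is therefore still hit by any orbit dense in $J(f)$.
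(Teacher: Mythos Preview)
Your proposal is correct and matches the paper's intended derivation: the corollary is stated immediately after Theorem~\ref{mthmDicho} without a separate proof, precisely because it follows by the contradiction argument you outline. Your handling of both alternatives of the dichotomy is accurate, and your observation that the only extra ingredient is the routine topological step of extracting a boundary component surrounding a Julia point is exactly right.
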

A \emph{piecewise-smooth Jordan curve} will be the image $h(\mathbb{S}^1)$ of the unit circle by a homeomorphic map $h : \mathbb{S}^1 \to h(\mathbb{S}^1) \subset \C$ which is continuously differentiable, with non-zero derivative, except possibly at  a finite number of points; $\lambda$ is a complex number.   
A Jordan curve $H = h(\bbS^1)$ admits different parametrisations. Given $z \in H$, if there exist a sub-arc $W \subset H$ containing $z$ and a $C^1$ diffeomorphism $g : (0,1) \to W$, then we say $H$ is \emph{smooth at $z$}; otherwise $H$ is \emph{not smooth at} $z$. 

The stimulus for this note came from several sources. 
The following conjecture was conveyed to the author by Lasse Rempe-Gillen. 
\begin{conj}
    Every line segment intersects the escaping set of $z \mapsto e^z$. 
\end{conj}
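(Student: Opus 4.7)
The plan is to adapt the nested-preimage techniques behind the Main Theorem to produce, on a given line segment $L$, not merely a dense orbit but an escaping one. Taking $\lambda = 1$, Misiurewicz's theorem gives $J(e^z) = \C$, so the standing full-plane hypothesis applies throughout. The target escaping point in $L$ will be extracted as the intersection of a shrinking nest of closed sub-arcs on which suitably chosen iterates land deep in exponential ``escape tracts''.

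The first ingredient is a system of tracts $T_R^{(k)} = \{w : \re w > R, \ \im w \in (2\pi k - \pi/3, 2\pi k + \pi/3)\}$ for $k \in \Z$. For large $R$, the restriction of $f(w) = e^w$ to each component $T_R^{(k)}$ is a homeomorphism onto a half-plane-like region covering infinitely many $T_{R'}^{(j)}$ with $R'$ comparable to $e^R$; iterated $f^{-1}$-branches through nested tracts foliate $\C$ into tubes whose points escape, since their real parts blow up. Inductively, I would construct times $n_1 < n_2 < \cdots$, radii $R_k \to \infty$, and closed sub-arcs $L_0 = L \supset L_1 \supset L_2 \supset \cdots$, together with tract components $T_{R_k}^{(j_k)}$, so that $f^{n_k}|_{L_k}$ is a homeomorphism onto a closed sub-arc $A_k \subset T_{R_k}^{(j_k)}$. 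Given $L_{k-1}$ and $n_{k-1}$, one first shows that some iterate $f^{n_k}(L_{k-1})$ crosses a deep tract rather than merely grazing it, then pulls the crossing back along the uniquely determined branch of $f^{-n_k}$ to obtain $L_k$. Any point of $\bigcap_k L_k$ then has $\re f^{n_k}(z^*) \to \infty$ and hence lies in the escaping set.

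The main obstacle is the refinement step: guaranteeing that iterated images of an arbitrarily short sub-arc sweep across arbitrarily deep tracts, and doing so with enough slack to place $A_k$ in the interior of $T_{R_k}^{(j_k)}$ while keeping $L_k$ from collapsing to an endpoint of $L$. This blends two flavours of argument. The topological side -- that a non-trivial arc in $J(f) = \C$ cannot be eternally trapped away from the tract system -- is close in spirit to the density claim underlying the Main Theorem; in fact, applying the Main Theorem to an auxiliary piecewise-smooth Jordan curve made by closing $L_{k-1}$ with a short arc already produces a point of dense orbit, and hence iterates entering every $T_R^{(k)}$. The analytic side is Koebe-type distortion control on the univalent pull-backs, ensuring that the nested arcs retain definite size relative to their ambient components. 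Making the two compatible, so that the choices of $j_k$ and $A_k$ can always be made to avoid the endpoints of $L_{k-1}$, is where the bulk of the work lies; this is essentially the problem of upgrading ``some point has dense orbit'' to ``some point escapes'', which density alone does not give.
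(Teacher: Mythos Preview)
The statement you are attempting to prove is presented in the paper as an \emph{open conjecture}, attributed to Rempe--Gillen, not as a theorem. The paper offers no proof of it, so there is no ``paper's own proof'' to compare your attempt against. The paper's actual results (Theorems~\ref{mthmPoint} and~\ref{mthmDicho}) produce points with \emph{dense} orbits on piecewise-smooth Jordan curves; as you yourself note, a point with dense orbit is never escaping, so those theorems do not settle the conjecture.

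Beyond that, your outline contains a gap in addition to the one you already flag. You conclude that any $z^* \in \bigcap_k L_k$ escapes because $\re f^{n_k}(z^*) \to \infty$. But divergence to infinity along a \emph{subsequence} of times does not imply $|f^n(z^*)| \to \infty$: between $n_k$ and $n_{k+1}$ the orbit may return to a bounded region (large modulus at time $n_k$ only forces large modulus at time $n_k+1$, not large real part). To obtain genuine escape you must arrange that every iterate from some stage onward remains in the tract system, for instance by taking $n_k = n_{k-1}+1$ once the orbit has entered a sufficiently deep tract and using that $f$ maps each $T_R^{(j)}$ across infinitely many $T_{R'}^{(j')}$ with $R' \gtrsim e^R$. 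With that repair the inductive refinement is routine; the genuinely hard step, as you say, is the \emph{initial} one of forcing some iterate of the given segment to cross a tract, and your suggestion of closing $L$ into a Jordan curve and invoking Theorem~\ref{mthmPoint} does not localise the resulting special point onto $L$ rather than onto the auxiliary closing arc.
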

It was perhaps inspired by the following long-standing open problem.
\begin{conj}
    The hairs of $f : z \mapsto e^z$ are real-analytic curves. 
\end{conj}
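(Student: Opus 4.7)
The final statement is listed as a long-standing open problem, so any plan is best read as the outline of a research programme aiming to upgrade the known $C^\infty$ regularity of hairs (due to Viana) to real-analyticity, using the self-similarity of the hair under iteration.

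The plan is to work with the functional equation for the hair. Fix an external address $\underline{s} = s_0 s_1 s_2 \ldots$ and parametrise the corresponding hair $\gamma_{\underline{s}} : [t_{\underline{s}}, \infty) \to \C$ by its escape rate. Writing $\sigma$ for the shift on addresses and $F$ for the induced growth function (with $F(t) \mc e^t$), the hair satisfies
$$ f\bigl(\gamma_{\underline{s}}(t)\bigr) \;=\; \gamma_{\sigma \underline{s}}\bigl(F(t)\bigr). $$
Equivalently, $\gamma_{\underline{s}}(t)$ is the $n$-fold pullback of $\gamma_{\sigma^n \underline{s}}(F^n(t))$ along the inverse branch selected by $s_0, \ldots, s_{n-1}$, namely a composition of $n$ logarithms shifted by $2\pi i s_j$. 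The idea is to exploit the fact that each such logarithm is a real-analytic diffeomorphism of a suitable strip.

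First I would establish a real-analytic asymptotic tail: show that for $t$ large, $\gamma_{\underline{s}}(t) = t + 2\pi i s_0 + \delta_{\underline{s}}(t)$ with $\delta_{\underline{s}}$ admitting an analytic extension to a complex neighbourhood $\{|\im w| < \rho\}$ of $[T, \infty)$, with $T$ and $\rho$ controlled by the growth of $|s_j|$. This should follow by writing $\delta_{\underline{s}}$ as the uniform limit of the pullback series and verifying the series converges on the complex strip. Second, I would propagate the analyticity inwards: setting up a transfer operator $T$ on a Banach space of pairs $(\underline{s}, \gamma_{\underline{s}})$ with real-analytic $\gamma_{\underline{s}}$, and arguing that pulling back via $\log(\cdot) + 2\pi i s_0$ preserves (and in fact enlarges) the strip of analyticity, since the logarithm has bounded distortion and contracts Euclidean distances away from $0$. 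This would identify the family of hairs as the unique fixed point of $T$ in a real-analytic Banach space, yielding analyticity on the interior of each hair.

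The main obstacle is behaviour at the finite endpoint of the hair. Pulling back along $\log$ repeatedly eventually shrinks the relevant strip on the side facing the endpoint, because the preimages approach the singular value $0$ of $\log$ infinitely often when the itinerary grows, so the complex extension cannot be continued uniformly down to $t_{\underline{s}}$. One would need either (i) an address-dependent bound on $|s_j|$ to guarantee a uniform strip, producing a partial result (real-analyticity on the open hair, for bounded-address hairs) or (ii) a genuinely new mechanism ensuring that the landing behaviour does not obstruct analytic continuation. Finally, even granted analyticity of the potential parametrisation, one must check that real-analyticity of the parametrised curve is independent of parametrisation at the endpoint, which is itself a delicate issue that may force a weaker formulation of the conjecture (e.g.\ real-analyticity away from landing points).
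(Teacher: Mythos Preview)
The paper does not prove this statement: it is explicitly presented as a long-standing open problem (a conjecture), with only the remark that Viana established $\cC^\infty$ parametrisations of the hairs. There is therefore no proof in the paper to compare your proposal against, and you correctly identified this at the outset.

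Your outline is a reasonable sketch of how one might try to attack the problem, and you are honest about the central difficulty (loss of a uniform strip of analyticity near the endpoint). But it remains a research programme rather than a proof: the key step of propagating analyticity through infinitely many logarithmic pullbacks while retaining a fixed complex neighbourhood is precisely the unresolved issue, and nothing in your plan closes that gap. So the proposal should not be read as a proof attempt at all, and the paper offers none either.
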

A point $z$ is \emph{escaping} if $\lim_{n\to \infty} |f^n(z)| = \infty$, and \emph{fast-escaping} if there is some $K$ for which $|f^{n-K}(z)| > |g^n(0)|$ for all $n\geq0$, where $g: z \mapsto e^z$. 
Regarding the latter conjecture, \emph{hairs} are connected components of the fast-escaping set. They were shown to admit $\cC^\infty$ parametrisations by Viana \cite{Viana:hairs}. 
The Julia set 
can be represented as the closure of the set of repelling periodic points. 
In this context, it coincides with the closure of the set of escaping points.  

From \cite{DobLSD, DobDense}, the third iterate of an oblique line is dense, under the exponential map. If a line segment intersects the fast-escaping set transversally, it is conceivable (and true, as we shall see) that the segment will have a dense orbit. 

Nice sets are important objects in complex dynamics (see for example, \cite{PrzRL11Rat, RLS14Stat, DobNice}) 
which generalise, in some sense,
Yoccoz puzzle pieces \cite{Mil00Local, Hub93Puz,Roe08Newt}.  
For every $z,\eps$, there is the smallest nice set $U(z,\eps)$ containing $B(z,\eps)$, so we have a canonical map $(z,\eps) \mapsto U(z,\eps)$. 
If there is some sort of backward contraction, the \emph{appending pullbacks method} \cite{RL07connect,PrzRL07Stat, DobNice} is powerful and guarantees that the set $U(z,\eps)$ will be comparable in size to $B(z,\eps)$. This method can also be used in conjunction with branch selection \cite{DobRat, DobNice}.
It is natural to wonder how regular is the boundary of a typical nice set $U(z,\eps)$.

\emph{Unbounded} nice sets with smooth boundaries exist. Indeed, for $z \mapsto e^z$, the real line is forward-invariant, so the upper and lower half-planes are nice sets, as are their pullbacks. 

For rational maps with real coefficients, the real line is forward-invariant. The upper and lower half-planes are nice sets, as are their pullbacks, but on the Riemann sphere (the natural domain for rational maps) all sets are bounded. Hence in this context, bounded nice sets with piecewise-smooth boundary exist. 
It would be interesting to know whether, modulo Moebius transformations, any \emph{other} rational maps with Julia set the whole sphere can have nice sets with piecewise-smooth boundary. 

\begin{example}
    Even in the exponential setting, part of the boundary of a bounded nice set may be smooth. Consider $f : z \mapsto e^z$. For $\eps >0$ small enough, \cite[Proposition~3]{DobNice} implies that the nice set $U(2,\eps)$ is bounded. 
    Let $V = U(2,\eps) \cap \mathbb{H}$ be the part of $U(2,\eps)$ lying in the upper half-plane. It is a nice set whose boundary contains a real interval. 
\end{example}

\section{Proofs}
Let $\lambda \in \ccc\setminus \{0\}$ and $f : z \mapsto \lambda e^z$  be a map from the exponential family. 
We denote by $\re(z)$ and $\im(z)$ the real and imaginary parts of a complex number $z$. 

\begin{lem}
    Given $\eps > 0$ and $z \in \ccc\setminus\{0\}$, there is $y \in \ccc\setminus\{0\}$ with $|\arg(y)-\pi/4| < \eps$ and $f^2(y) = z$. 
\end{lem}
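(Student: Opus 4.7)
The plan is to construct $y$ explicitly by choosing inverse branches. Since $f(w) = z$ means $w = \log(z/\lambda) + 2\pi i k$ for some $k \in \Z$, and then $f(y) = w$ means $y = \log(w/\lambda) + 2\pi i j$ for some $j \in \Z$, the task is to pick integers $k$ and $j$ so that $y$ has argument close to $\pi/4$.

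First, I would pick $k$ to be a large positive integer. Then $w_k := \log(z/\lambda) + 2\pi i k$ satisfies $|w_k| \sim 2\pi k$ and $\arg(w_k) \to \pi/2$ as $k\to\infty$; in particular, for all large $k$,
\[
\re(\log(w_k/\lambda)) = \log|w_k/\lambda| = \log(2\pi k) - \log|\lambda| + o(1),
\]
which is large and positive, while $\im(\log(w_k/\lambda)) = \arg(w_k/\lambda)$ is bounded.

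Next, I would choose the integer $j$ so that
\[
y := \log(w_k/\lambda) + 2\pi i j
\]
has imaginary part within $\pi$ of its real part $a := \log|w_k/\lambda|$. This is possible because the available imaginary parts form an arithmetic progression of spacing $2\pi$. Writing $y = a + i b$ with $|b-a|\leq \pi$ and $a\to\infty$, we have $b/a \to 1$, so $\arg(y) = \arctan(b/a) \to \pi/4$. Taking $k$ large enough makes $|\arg(y)-\pi/4|<\eps$, and $y\neq 0$ since $a>0$. By construction $f^2(y)=f(w_k)=z$.

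There is no real obstacle: the only point to be a little careful about is that the imaginary part of $y$ is determined only modulo $2\pi$ by the choice of branch, but since the target real part $a$ grows like $\log k$ while the spacing is fixed, the relative error $\pi/a$ tends to $0$, which is exactly what gives $\arg(y)\to \pi/4$.
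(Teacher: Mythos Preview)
Your proof is correct and follows essentially the same two-step strategy as the paper: first use $2\pi i$-periodicity to pick a preimage $w$ of $z$ with large modulus, then use periodicity again to find a preimage $y$ of $w$ with argument near $\pi/4$. The paper phrases this geometrically (for large $r$ the sector $|\arg - \pi/4|<\eps$ contains a full vertical segment of length $2\pi$, which $f$ maps onto the circle of radius $|\lambda|e^r$), whereas you carry out the same idea by explicit logarithm computations; the content is identical.
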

\begin{proof}
For every $r$ large enough, the sector with argument $(\pi/4-\eps, \pi/4 + \eps)$ contains a vertical line segment of length $2\pi$ and real part $r$. 
A closed vertical line segment of length $2\pi$ and real part $r$ gets mapped by $f$ to a circle centred on the origin of radius $|\lambda|e^r$. 
The map $f$ is $2\pi i$-periodic, so any non-zero point has preimages arbitrarily far away from the origin. Combining these ideas, the lemma follows.
\end{proof}
\begin{lem}
    Given $\eps > 0$ and $y \in \ccc\setminus\{0\}$ with $|\arg(y)-\pi/4| < \frac12$,
     there are four closed arcs, $C_1,\ldots,C_4$ say, of circles centred at the origin 
     such that 
    \begin{itemize}
        \item
            $C_1 \cap C_2 = C_3 \cap C_4 = \emptyset$;
        \item
            the length of any arc is at most $\eps_1$;
        \item
            the union of the arcs $C_1, C_2$ and the $2\pi i$-translate of the arcs $C_3,C_4$ is a Jordan curve surrounding the point $y$.
    \end{itemize}
\end{lem}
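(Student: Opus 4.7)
The plan is to exhibit the required Jordan curve as the boundary of a small curvilinear quadrilateral $Q$ containing $y$: two opposite sides of $Q$ will lie on circles centred at $0$ of radii just above and below $|y|$, and the other two sides will lie on circles centred at $2\pi i$ of radii just above and below $|y - 2\pi i|$ (the latter being the $2\pi i$-translates of circles centred at $0$ of those same radii).

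First I would set $r := |y|$ and $s := |y - 2\pi i|$. The tangent to $\{|z|=r\}$ at $y$ has direction $iy/r$, while the tangent to $\{|z-2\pi i|=s\}$ at $y$ has direction $i(y-2\pi i)/s$; these make an angle $\theta := |\arg y - \arg(y - 2\pi i)|$. Because $|\arg y - \pi/4| < 1/2$, the point $y$ lies strictly off the imaginary axis, so $0, y, 2\pi i$ are not collinear and $\theta > 0$: the two circles through $y$ cross transversally. For a small $\delta > 0$ to be chosen, set $\Gamma_1^\pm := \{|z| = r \pm \delta\}$ and $\Gamma_2^\pm := \{|z - 2\pi i| = s \pm \delta\}$. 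Transversality combined with an implicit-function argument shows that for all $\delta$ sufficiently small, each $\Gamma_1^\sigma$ meets each $\Gamma_2^\tau$ (with $\sigma, \tau \in \{+,-\}$) in a unique point $v_{\sigma\tau}$ near $y$, and these four points are the vertices, in cyclic order, of a curvilinear quadrilateral $Q$ containing $y$.

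I would then take $C_1, C_2$ to be the two sides of $\partial Q$ lying on $\Gamma_1^+$ and $\Gamma_1^-$ respectively; these circles are concentric with distinct radii, so $C_1 \cap C_2 = \emptyset$. The remaining two sides of $\partial Q$ lie on circles centred at $2\pi i$ of radii $s \pm \delta$; defining $C_3, C_4$ to be their images under $z \mapsto z - 2\pi i$ yields arcs of distinct origin-centred circles of radii $s \pm \delta$, so $C_3 \cap C_4 = \emptyset$, and their $2\pi i$-translates are exactly the remaining two sides. Hence $C_1 \cup C_2 \cup (C_3 + 2\pi i) \cup (C_4 + 2\pi i) = \partial Q$ is the desired Jordan curve enclosing $y$. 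To bound the arc lengths, adjacent vertices of $Q$ are separated by at most a constant multiple of $\delta/\sin\theta$ (the standard estimate for two strips of width $2\delta$ crossing at angle $\theta$), and each side subtends a tiny angle at its own centre, so its length is comparable to the corresponding chord. Choosing $\delta$ small in terms of $\theta$ and the desired bound produces arcs of length at most the required value.

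The only step needing real care, though essentially routine, is verifying the existence and cyclic arrangement of the four intersection points $v_{\sigma\tau}$ uniformly for $\delta$ small; this follows from the implicit function theorem applied at $(r, s)$ using the transversality $\theta > 0$. Note that $\theta$ may be small (for $|y|$ large, the two circles are nearly tangent at $y$), so one must let $\delta$ depend on $y$; but this is permitted by the statement. Apart from this, everything is a direct geometric construction.
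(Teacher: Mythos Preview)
Your construction is correct and is precisely the sort of verification the paper omits: the paper's own proof consists of the single word ``Evident.'' Your curvilinear quadrilateral bounded by two origin-centred circular arcs and two $2\pi i$-translates of origin-centred circular arcs is the natural object to write down, and your use of the hypothesis $|\arg y-\pi/4|<1/2$ to force $y$ off the imaginary axis (hence transversality of the two families of circles at $y$) is exactly what is needed. The only remark worth adding is that the two base circles $\{|z|=r\}$ and $\{|z-2\pi i|=s\}$ meet in a second point $-\bar y$, so one should (as you implicitly do) work in a neighbourhood of $y$ when invoking the implicit function theorem; once localised, the four perturbed intersection points exist, are distinct, and sit in the expected cyclic order, giving a Jordan boundary.
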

\begin{proof}
    Evident.
\end{proof}
Combining the two preceding lemmas and knowing that $f$ is a local diffeomorphism, we obtain:
\begin{lem} \label{lemCircles}
    Given $\eps >0$ and $z \in \ccc\setminus\{0\}$, there are four closed arcs $C_1, \ldots, C_4$ of circles centred at the origin such that $f^2(C_1 \cup \cdots \cup C_4)$ is a Jordan curve surrounding $z$ with diameter bounded by $\eps$ and if $f^2(C_i) \cap f^2(C_j) \ne \emptyset$ and $i \ne j$, then $f^2(C_i)$ meets $f^2(C_j)$ transversally. 
\end{lem}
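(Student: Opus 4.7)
The plan is to apply the two preceding lemmas in sequence, relying on the fact that $f'(w) = \lambda e^w$ never vanishes, so $f$ is a local diffeomorphism everywhere. First I use the first lemma to pick $y$ with $f^2(y) = z$ and $|\arg(y) - \pi/4| < 1/2$, then apply the second lemma to $y$ with an arc-length parameter $\eps_1 > 0$ to be fixed below, obtaining arcs $C_1, \ldots, C_4$ such that $\gamma := C_1 \cup C_2 \cup (C_3 + 2\pi i) \cup (C_4 + 2\pi i)$ is a Jordan curve surrounding $y$ with four gluing corners.

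On any fixed compact neighborhood of $y$ or $f(y)$, $f$ has a positive modulus of injectivity. Choosing $\eps_1$ small enough that $\gamma$ and $f(\gamma)$ each lie inside a region where $f$ is injective on sets of diameter $\eps_1$, $f^2$ is then injective on $\gamma$. By the $2\pi i$-periodicity $f(C_j + 2\pi i) = f(C_j)$, one has $f(\gamma) = f(C_1 \cup \cdots \cup C_4)$, and hence $f^2(\gamma) = f^2(C_1 \cup \cdots \cup C_4)$ is a Jordan curve surrounding $z = f^2(y)$. Its diameter can be made less than $\eps$ by further shrinking $\eps_1$.

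For transversality: the four corners of $\gamma$ are points $p = w + 2\pi i$ where some $C_i$ (with $i \in \{1,2\}$) meets some $C_j + 2\pi i$ (with $j \in \{3,4\}$ and $w \in C_j$). The tangent to $C_i$ at $p$ is perpendicular to the radial direction from $0$ to $p$, while the tangent to $C_j + 2\pi i$ at $p$ is perpendicular to the direction from $0$ to $w = p - 2\pi i$. As $p$ and $p - 2\pi i$ point in different directions from the origin, these tangents are not parallel; the crossings at the corners of $\gamma$ are transverse. Since $f^2$ is a local diffeomorphism, it preserves transversality, and any intersection of distinct $f^2(C_i)$ arcs in $f^2(\gamma)$ occurs only at such corner images, where the meeting is transverse.

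The main care required is in the choice of $\eps_1$: it must simultaneously give injectivity of $f$ on $\gamma$ and on $f(\gamma)$, yield diameter less than $\eps$, and avoid any accidental alignment of tangents at a corner. The existential freedom in the choice of radii in the second lemma handles the last point.
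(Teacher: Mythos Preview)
Your proof is correct and follows the paper's own approach, which is simply to combine the two preceding lemmas with the fact that $f$ is a local diffeomorphism; you have filled in the details that the paper leaves implicit. One small remark: the final sentence about needing ``existential freedom in the choice of radii'' to avoid accidental tangent alignment is unnecessary, since your own argument already shows that the corner tangents are perpendicular to the directions of $p$ and $p-2\pi i$ from the origin, and these directions cannot coincide when $p$ is near $y$ with $\arg(y)$ close to $\pi/4$.
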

Hence if we can approximate circles, we can create piecewise-smooth Jordan curves surrounding any non-zero point. This will allow us to recursively find points with dense orbits. 

\medskip



Denote by $I$ the imaginary axis; 
the  set $f^{-1}(I)$ partitions the plane into horizontal strips of height $\pi$. Every second strip gets mapped by $f$ to the right half-plane.  Two such strips, $\hat S_0$ and $\hat S_1$ say,  are contained in $\{s : |\im(z)| \leq 5\pi/2\}$ (there is a third if $\lambda \in \R$, but its existence is not important to us).
We choose $K > 10$ large enough that $|\lambda|e^K > 200K$. If $\re(z) \geq K$, then $|f(z)| > 200 \re(z) > 2000$. 
Let
$$  
S_0 := \overline{\{z \in \hat S_0  \colon \re(z) \geq K\}}
\text{ and }
    S_1 := \overline{\{z \in \hat S_1  \colon \re(z) \geq K\}}.$$
    Denote the union $S_0 \cup S_1$ by $S$. 

    \begin{lem} \label{lemElt}
        For $n\geq 1$, if $z, f(z), \ldots, f^n(z) \in S$ then $\re(f^n(z)) > 100^n\re(z)$ and 
        $$|\arg(f^n(z))| < \frac{10}{100^n\re(z)},$$ 
        while 
        \begin{equation}\label{eqnarg}
        |\arg(Df^n(z))| \leq \sum_{j=1}^n|\arg(f^j(z))| < \frac{10}{\re(z)} \frac{1}{99} < 1/50.
    \end{equation}
    \end{lem}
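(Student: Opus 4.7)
The plan is to prove all three claims by induction on $n$, exploiting two structural facts: first, every point $w \in S$ satisfies $\re(w) \geq K$ and $|\im(w)| \leq 5\pi/2$, so $|\arg(w)|$ is small; second, for exponential maps $f'(w) = f(w)$, hence by the chain rule $\arg(Df^n(z)) = \sum_{j=1}^n \arg(f^j(z))$ (on a continuous branch of $\arg$, which is available because all the individual arguments turn out to be tiny).

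For the base case $n=1$: since $\re(z) \geq K$, our choice of $K$ gives $|f(z)| = |\lambda|e^{\re(z)} > 200\re(z) > 2000$. Because $f(z) \in S$, we have $|\im(f(z))| \leq 5\pi/2 < 10$, so
\[
\re(f(z)) \,=\, \sqrt{|f(z)|^2 - \im(f(z))^2} \,>\, 0.99\,|f(z)| \,>\, 100\,\re(z),
\]
using $|f(z)|^2 > 4 \cdot 10^6$. The trivial bound $|\arctan t| \leq |t|$ then yields
\[
|\arg(f(z))| \,\leq\, \frac{|\im(f(z))|}{\re(f(z))} \,\leq\, \frac{5\pi/2}{100\,\re(z)} \,<\, \frac{10}{100\,\re(z)},
\]
which is the $n=1$ case of both desired bounds.

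For the inductive step, suppose the two bounds hold up to $n-1$. Apply the base-case argument with $f^{n-1}(z)$ in place of $z$; this is legitimate precisely because $f^{n-1}(z),\,f^n(z) \in S$. This gives $\re(f^n(z)) > 100\,\re(f^{n-1}(z)) > 100^n\,\re(z)$ and
\[
|\arg(f^n(z))| \,\leq\, \frac{5\pi/2}{\re(f^n(z))} \,<\, \frac{10}{100^n\,\re(z)}.
\]
Finally, using $\arg(Df^n(z)) = \sum_{j=1}^n \arg(f^j(z))$ and summing a geometric series,
\[
|\arg(Df^n(z))| \,\leq\, \sum_{j=1}^n \frac{10}{100^j\,\re(z)} \,<\, \frac{10}{\re(z)} \cdot \frac{1}{99} \,<\, \frac{1}{50},
\]
since $\re(z) \geq K > 10$. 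This establishes \eqref{eqnarg}.

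There is no real obstacle here; the mild subtlety is only checking that all arguments stay well within $(-\pi,\pi]$ so that the identity $\arg(Df^n(z)) = \sum_j \arg(f^j(z))$ holds without $2\pi$ corrections, but this is immediate from the bound $|\arg(f^j(z))| < 10/(100^j \re(z)) \ll 1$.
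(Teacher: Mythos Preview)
Your proof is correct and is precisely the routine induction the paper has in mind; the paper's own proof consists of the single word ``Elementary,'' and you have filled in exactly the expected details (the bound $|f(w)|>200\,\re(w)$ from the choice of $K$, the strip bound $|\im(w)|\le 5\pi/2$, and the identity $Df^n(z)=\prod_{j=1}^n f^j(z)$ coming from $f'=f$).
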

\begin{proof} Elementary. \end{proof}

    
    Each connected component $S_0$ and $S_1$ of $S$ gets mapped by $f$ univalently onto the closure of the intersection of the right half-plane with the exterior of the disc of radius $|\lambda|e^K$.
Given a sequence $a = (a_n)_{n\geq0} \in  \{0,1\}^\N$, 
let $$T_a := \{z \in S \colon f^n(z) \in S_{a_n} \text{ for all } n\geq 0\}.$$
Then $T_a$ is a simple curve in $S$ which tends to $\infty$, admits a $\cC^\infty$ parametrisation \cite{Viana:hairs} 
and has an endpoint with real part $K$. 
By \eqref{eqnarg}, 
the tangent vectors to $T_a$ always have  argument with modulus bounded by $1/50$ and are asymptotically horizontal. 

We denote by $T$ the subset of the Julia set defined by
$$
T := \bigcup_{a \in \{0,1\}^\N} T_a = \{z \colon  f^n(z) \in S \text{ for all }n \geq 0\} = \bigcap_{n\geq0} f^{-n}(S).$$
The connected components of $T$ are the pairwise-disjoint curves $T_a, a \in \{0,1\}^\N$, of which there are uncountably many. 


\begin{lem} \label{lemNotcontained}
    Consider a continuous curve $\gamma : [0,1] \to S$ satisfying $\{0\}$ is a connected component of $\gamma^{-1}(T)$. There are arbitrarily small $\eps >0$ such that, for some $k \geq 1$,  $f^n(\gamma([0,\eps])) \in S$ for $n=1,\ldots, k$ and $f_k(\gamma(\eps)) \in \partial S$. 
\end{lem}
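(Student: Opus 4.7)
The plan is to take $\eps$ as the right endpoint $d_k$ of the maximal initial subinterval of $[0,1]$ whose first $k$ iterates under $f$ remain in $S$, and then let $k\to\infty$. The hypothesis that $\{0\}$ is a connected component of $\gamma^{-1}(T)$ should force these right endpoints to shrink to $0$, and the maximality of the interval should force $f^n(\gamma(d_k))$ to meet $\partial S$ for some $n\le k$.

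Concretely, for each $k\ge 0$ I will define the closed, nested sets
$$B_k := \{t \in [0,1] \colon f^n(\gamma(t)) \in S \text{ for all } n = 1,\ldots, k\},$$
which satisfy $\bigcap_k B_k = \gamma^{-1}(T)$. Let $D_k = [0, d_k]$ be the connected component of $B_k$ containing $0$. Then $\bigcap_k D_k$ is a decreasing intersection of closed intervals containing $0$, hence itself a closed subinterval of $[0,1]$ containing $0$; it is also contained in $\gamma^{-1}(T)$. Since $\{0\}$ is a connected component of $\gamma^{-1}(T)$, the only connected subset of $\gamma^{-1}(T)$ containing $0$ is $\{0\}$, so $\bigcap_k D_k = \{0\}$ and $d_k \to 0$.

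Next, for each $k$ with $d_k < 1$, I claim some $n(k) \in \{1, \ldots, k\}$ satisfies $f^{n(k)}(\gamma(d_k)) \in \partial S$. Otherwise every $f^n(\gamma(d_k))$ with $1 \le n \le k$ lies in $\mathrm{int}(S)$, and by continuity of each iterate we would find an open two-sided neighbourhood of $d_k$ inside $B_k$, contradicting the maximality of $D_k$ as a connected component of $B_k$. Setting $\eps := d_k$ (arbitrarily small by the previous paragraph) and $k' := n(k)$, we have $[0, \eps] \subset B_k$, so $f^n(\gamma([0,\eps])) \subset S$ for $n = 1, \ldots, k'$, together with $f^{k'}(\gamma(\eps)) \in \partial S$, which is the desired conclusion.

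The delicate step will be verifying $d_k > 0$ in the first place. When $f^n(\gamma(0)) \in \mathrm{int}(S)$ for every $n \ge 1$, this is immediate by continuity, since a whole one-sided neighbourhood of $0$ then sits in $B_k$. When the orbit of $\gamma(0)$ instead meets the left boundary $\{\re(z) = K\}$ at some step --- for example if $\gamma(0)$ is a preimage of an endpoint of some hair $T_a$ --- one needs to exploit the local diffeomorphism property of $f$ together with the $\cC^\infty$ structure of the hairs to locate the witnessing $\eps$ by a short transversality-type argument on the earliest iterate reaching the boundary. I expect this boundary case to be the main technicality, handled briefly by the author.
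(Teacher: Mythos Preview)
Your argument is correct, and the worry in the final paragraph is unfounded: for every $n\ge 1$ one has $f^n(\gamma(0))\in\mathrm{int}(S)$. Indeed $f^n(\gamma(0))\in T$ cannot lie on a horizontal boundary component of $S$ (such points map under $f$ to the imaginary axis, hence leave $S$), while \rlem{Elt} gives $\re(f^n(\gamma(0)))>100^nK>K$, so it is not on the left boundary either. Continuity then yields $d_k>0$ for every $k$, and no transversality argument is needed. The same modulus estimate shows, incidentally, that the boundary point $f^{n(k)}(\gamma(d_k))$ you produce necessarily lies on a \emph{horizontal} boundary component of $S$, which is what the application in \rprop{Transv} actually uses.

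The paper takes a different, more geometric route. Fixing the itinerary $a$ with $\gamma(0)\in T_a$, it introduces curves $W_{k,a}^\pm$ obtained by pulling back the upper and lower horizontal boundaries of $S_{a_k}$ along the inverse branches dictated by $a_0,\ldots,a_{k-1}$; these curves lie in $S$, join the left boundary to $\infty$, and accumulate on $T_a$ from above and below as $k\to\infty$. One then takes $\eps$ to be a parameter at which $\gamma$ meets some $W_{k,a}^\pm$. Your nested-sets argument is cleaner and avoids appealing to the curve structure of the hairs; the paper's picture, on the other hand, makes the exit mechanism geometrically explicit and immediately identifies the boundary component that is hit.
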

\begin{proof}
    For $a \in \{0,1\}^{\N}$, we can denote by
     $W^+_{0,a}$ the upper horizontal boundary of $S_{a_0}$ and $W^-_{0,a}$ the lower horizontal boundary. 
     Then let 
     $W_{k,a}^\pm$ be the set of points $z$ satisfying $f^n(z) \in S_{a_n}$ for $n = 0, 1,\ldots, k$ and $f^k(z) \in W_{0,\sigma^k(a)}^\pm$, where $\sigma  : \{0,1\}^\N \to \{0,1\}^\N$ denotes the left-shift. Each $W_{k,a}^\pm$ is a curve in $S$ joining the left boundary of $S$ to $\infty$. 
    The curves $W_{k,a}^+$ accumulate on $T_a$ from above and  $W_{k,a}^-$ accumulate on $T_a$ from below. The result follows, choosing $a$ so that $\gamma(0) \in T_a$.
\end{proof}

%
%
We say that a $\cC^1$ curve $\gamma : [0,1] \to \C$ intersects $T$ transversally at $\gamma(t)$ if $\gamma(t) \in T_a$ for some $a$ and $\gamma$ (restricted to any neighbourhood of $t$) intersects the curve $T_a$ transversally. 
Given a $\cC^1$ curve $\gamma : [0,1] \to \C$, we denote by $A_n(\gamma)$ the set of angles (in $[0,\pi)$) at which $f^n\circ \gamma$ intersects $T$ (transversally provided the angle is non-zero). As $T$ is forward invariant and smooth, $A_n(\gamma) \subset A_{n+1}(\gamma)$. 

\begin{prop} \label{propTransv}
    Let $\gamma : [0,1] \to S$ be a $\cC^1$ curve satisfying $\{0\}$ is a connected component of $\gamma^{-1} (T)$. 
    Then 
    $$
    \overline{ \bigcup_{n\geq0} A_n(\gamma) } = [0,\pi].$$
\end{prop}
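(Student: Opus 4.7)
The plan is to iterate $\gamma$ until its image under $f$ wraps around the origin, producing arcs of approximate circles that transversally cross the hair lamination $T$ at a dense set of angles in $[0,\pi]$.

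As a first step, I would apply \rlem{Notcontained} to find, for arbitrarily small $\eps > 0$, an iterate $k = k(\eps)$ such that $f^k\circ\gamma([0,\eps]) \subset S$ with $f^k(\gamma(\eps)) \in \partial S$. Assuming exit through a horizontal boundary of $S$ (the vertical case being similar), the image $f^k\circ\gamma([\eps, \eps'])$ for $\eps' > \eps$ slightly larger lies just outside $S$ in an adjacent strip whose image under $f$ is the left half-plane. For $\eps'$ growing further, this adjacent portion can be arranged to sweep out imaginary values so that its image under $f$ accumulates imaginary variation of at least $2\pi$; a subsequent iterate of $f$ then produces an arc $\Gamma$ that wraps at least once around the origin. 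The right half of $\Gamma$ is an approximate circular arc subtending an angular range of $\pi$ and, once the radius is sufficiently large, crossing $S$.

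Next, I would use conformality and the near-horizontality of hairs (\rlem{Elt}, \eqref{eqnarg}) to translate intersection angles of $\Gamma$ with hairs into tangent angles of $\Gamma$. At any point on $\Gamma$, the tangent direction is, up to an error bounded by $1/50$, perpendicular to the radial direction from the origin, and as $\Gamma$ traverses its half-circle the tangent rotates by $\pi$. Since the hairs $T_a$ are transversally approached from both sides by the accumulating curves $W_{k,a}^\pm$, open sub-arcs of $\Gamma$ can be shown to contain points where $\Gamma$ meets some $T_a$ transversally. The associated intersection angles thus form a dense subset of $[0,\pi]$.

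The main technical obstacle is bridging the gap between intersections of $\Gamma$ with the $W$-curves (which are not in $T$) and intersections with $T$ itself. I would handle this by noting that $W_{k,a}^\pm$ converges to $T_a$ uniformly on $S$-compacta as $k \to \infty$, so $\Gamma$'s transverse crossings of $W_{k,a}^\pm$ (for large $k$) limit to transverse crossings of some $T_{a'}$ at essentially the same angle. Combined with the freedom to choose arbitrarily small $\eps$ (yielding wrapping arcs at varying radii), a diagonal argument then delivers the required density of $\bigcup_n A_n(\gamma)$ in $[0,\pi]$.
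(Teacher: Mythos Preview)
Your approach has a genuine geometric gap. After \rlem{Notcontained}, you follow $f^k\circ\gamma$ past $\eps$ into the adjacent strip, whose image under $f$ is the left half-plane, and then iterate twice more to obtain a wrapping arc $\Gamma$. But $f^k(\gamma(\eps))$ has real part at least $100^k K$ by \rlem{Elt}, so nearby points in that adjacent strip map under $f$ to points with real part of order $-|\lambda|e^{100^k K}$; one further iterate then yields an arc of modulus roughly $|\lambda|\exp\bigl(-|\lambda|e^{100^k K}\bigr)$, which is minuscule and cannot meet $S \subset \{\re(z) \geq K\}$, let alone $T$. So your circle $\Gamma$ never crosses the hair lamination and the angle argument collapses. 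There is also no control over $f^k\circ\gamma$ on $(\eps,\eps']$: \rlem{Notcontained} says nothing beyond the exit time, and ``can be arranged'' is not available since $\gamma$ is fixed and you may only choose subintervals and iterates.

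The paper instead stays on $[0,\eps]$ and applies just one further iterate. Since $f^k\circ\gamma([0,\eps]) \subset S$, the curve $f^{k+1}\circ\gamma([0,\eps])$ lies in the right half-plane with modulus exceeding $e^C$, and its endpoint $f^{k+1}(\gamma(\eps))$ lies on the imaginary axis (because $f^k(\gamma(\eps))$ sits on a horizontal edge of $S$, a component of $f^{-1}(I)$). Thus $\arg(f^{k+1}(\gamma(t)))$ sweeps an interval of length $\pi/2$. The identity $Df(z)=f(z)$ combined with \rlem{Elt} gives, via the chain rule, that $\arg((f^{k+1}\circ\gamma)'(t))$ is within $O(1/C)$ of $\arg(\gamma'(0)) + \arg(f^{k+1}(\gamma(t)))$, so the tangent direction also sweeps $\pi/2$. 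This long curve crosses many $2\pi i$-translates of $T$ at a $3/C$-dense set of angles in a half-interval based at $\arg(\gamma'(0))$; by periodicity and conformality of $f$, one more iterate converts these into intersections with $T$ itself, so your $W$-curve limiting argument is unnecessary. Repeating from a new subcurve to obtain the opposite sign of $\alpha$, and letting $C\to\infty$, gives density in $[0,\pi]$.
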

\begin{proof}
    Let $C>1.$
    By replacing by an iterate and a subinterval if necessary, we may assume that ${\gamma([0,1])}\subset \{z : \re(z)>C\}$. 
    Obtain $k$ 
    and $\eps>0$ 
    from \rlem{Notcontained}, where $\eps$ is
     small enough that the argument of $\gamma'$ on $[0,\eps]$ varies by at most $1/C$.
     Now applying \rlem{Elt}, the argument of $(f^k\circ \gamma)'$ differs from $\gamma'(0)$  by at most $2/C$. 
     By choice of $k$ and $\eps$, $f^n(\gamma([0,\eps])) \subset S$ for $n \leq k$ and $f^k\circ \gamma(\eps) \in \partial S$. 
     By the Chain Rule,
     the argument of $(f^{k+1}\circ \gamma)'(t)$ is, modulo $2\pi$, within $3/C$ of 
     $$\gamma'(0) + \arg(f^{k+1}(\gamma(t))).$$
     The latter summand varies between $\arg(f^{k+1}(\gamma(0))) \approx 0$ and $\arg(f^{k+1}(\gamma(\eps))) = \pm \pi/2$. 
    Meanwhile, the curve cannot approach zero: 
    $$|f^{k+1}(\gamma(t))| > e^C$$ for $t \in [0,\eps]$. 
    Hence $f^{k+1}\circ \gamma_{|[0,\eps]}$ crosses each $2j \pi i $-translate of $T$ for $\alpha j = 1,2, \ldots, \lfloor e^C/2\pi \rfloor$ where $\alpha$ is either $1$ or $-1$. 
    The angles of intersection are $3/C$-dense in $[\arg(\gamma'(0)), \arg(\gamma'(0))\pm \pi/2]$. 
    
    We can now obtain a subcurve $\rho : [-1, 1] \to \gamma([0,\eps])$ so that $\{0\}$ is a connected component of $\rho^{-1}(T)$ and repeat the process but with $\alpha$ taking \emph{both} values $\pm 1$. As $C$ was arbitrary, the result follows. 
\end{proof}

We shall use little-$o$ notation, where any $o(1)$ term tends to $0$ as $n \to \infty$. 

\begin{lem} \label{lemCircleApprox}
    Let $r >0$ and $\rho_r : \xi \mapsto r e^{i\xi}$, defined on $[0,4\pi]$. 
    Let $\gamma : [0,1] \to S$ be a $\cC^1$ curve satisfying $\{0\}$ is a connected component of $\gamma^{-1} (T)$.

There are $n>0$ and  a closed subinterval of $(0,1]$ on which $f^n\circ \gamma$ is affinely conjugate to a map which is arbitrarily $\cC^1$-close to $\rho_r$.
\end{lem}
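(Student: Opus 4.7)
The plan is to use \rpro{Transv} to pass to a subcurve with nearly horizontal tangent and then invoke \rlem{Notcontained} together with an explicit Taylor computation to produce a near-circle.

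\textbf{Setup.} By \rpro{Transv}, find $n_0 \geq 0$, a point $t^* \in (0,1)$, and a small neighborhood of $t^*$ such that $f^{n_0}\gamma$ intersects some $T_a$ transversally at $t^*$ at angle arbitrarily close to $0$. Restricting and reparametrizing produces a $\cC^1$ curve $\tilde\gamma:[0,1]\to S$ satisfying the hypotheses of the lemma, for which $\arg\tilde\gamma'(0)$ is arbitrarily small (since $T_a$ is nearly horizontal by \rlem{Elt}). Apply \rlem{Notcontained} to $\tilde\gamma$ to obtain arbitrarily small $\eps > 0$ and $k \geq 1$ with $f^n\tilde\gamma([0,\eps])\subset S$ for $1 \leq n \leq k$ and $\alpha(\eps) := f^k\tilde\gamma(\eps)$ on a horizontal component of $\partial S$. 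By \rlem{Elt}, $R := \re\alpha(\eps) > 100^k \re\tilde\gamma(\eps)$ becomes arbitrarily large as $\eps \to 0$, and $\arg\alpha'(\eps)$ is close to $\arg\tilde\gamma'(0)$. Absorbing rotations into the final affine conjugation we may assume $\arg\lambda = 0$, so $\alpha(\eps) = R + i\pi/2$ and $\alpha'(\eps) \approx M'$ real positive.

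\textbf{Approximation.} For small $\tau \in [0,s]$, Taylor-expand $\alpha(\eps-\tau) = (R - \tau M') + i\pi/2 + o(\tau M')$, so that
\[
f\alpha(\eps-\tau) = i|\lambda|e^R e^{-\tau M' + o(\tau M')},
\]
an approximate vertical segment on the imaginary axis. Setting $s := 4\pi/(M'|\lambda|e^R)$ and reparametrizing via the affine map $\phi(\xi) := \eps - s\xi/(4\pi)$ gives $\tau M' = \xi/(|\lambda|e^R)$ and hence
\[
e^{-\tau M'} = 1 - \frac{\xi}{|\lambda|e^R} + O\bigl((|\lambda|e^R)^{-2}\bigr).
\]
Applying $f$ once more,
\[
f^{k+2}\tilde\gamma\circ\phi(\xi) = \lambda\exp\bigl(i|\lambda|e^R - i\xi + O((|\lambda|e^R)^{-1})\bigr) = A e^{-i\xi}\bigl(1 + O((|\lambda|e^R)^{-1})\bigr),
\]
where $A := \lambda e^{i|\lambda|e^R}$. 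The limit $\xi \mapsto Ae^{-i\xi}$ is a circle of radius $|\lambda|$ traced twice clockwise; the affine maps $\xi \mapsto 4\pi-\xi$ on the domain and $z \mapsto (r/A)z$ on the range conjugate it to $\rho_r$ exactly.

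\textbf{Error control.} Upgrading $\cC^0$-closeness to $\cC^1$-closeness requires analogous bounds on the derivatives. The exponential expansions contribute errors of size $O(1/|\lambda|e^R) \to 0$. The $o(\tau M')$ term from Taylor-expanding $\alpha$ is the main obstacle and is handled via Koebe's distortion theorem: inverse branches of $f^k$ are univalent on right-half-plane regions containing $\alpha(\eps)$, bounding the variation of $\alpha'$ on shrinking neighborhoods of $\tilde\gamma(\eps)$. As $R \to \infty$ the relevant neighborhood shrinks while the domain of univalence remains controlled, so this error becomes negligible compared to the linear term $\tau M'$. Taking $n := n_0 + k + 2$ yields the claim.
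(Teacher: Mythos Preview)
Your argument has a genuine gap: it produces a circle of radius $|\lambda|$, not of the prescribed radius $r$. Trace the computation: because you chose the intersection angle with $T$ to be arbitrarily small, the curve $f\circ\alpha$ near $\eps$ lands essentially \emph{on} the imaginary axis (real part tending to $0$), and applying $f$ to a point $iy$ on the imaginary axis gives $\lambda e^{iy}$, a point of modulus $|\lambda|$. Your attempted fix, post-composing with $z\mapsto (r/A)z$, is not permitted: in this lemma ``affinely conjugate'' means an affine reparametrisation of the \emph{domain} only. This is clear both from the paper's own proof (``affinely reparametrised'') and from the application: in \rprop{curvearcs} one invokes \rlem{Circles}, which requires genuine arcs of circles of specified radii centred at the origin so that their images under $f^2$ assemble into a small Jordan curve around a given $z$. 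If range-scaling were allowed, the dependence on $r$ in the statement would be vacuous.

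The paper's proof differs precisely at the step where you chose the angle to be $\approx 0$: it instead uses \rprop{Transv} to arrange a transversal intersection at angle approximately $2\eps$, where $\eps$ is chosen with $\eps < r < \eps^{-1}$. This small but \emph{non-zero} slope means that, after the same sequence of iterates, the near-vertical segment $f(A)$ is not pinned to the imaginary axis but has direction in $(\pi/2+\eps,\pi/2+3\eps)$; its real part therefore sweeps an interval large enough to contain $\log r$, and one then selects a sub-arc $W'$ whose $2k\pi i$-translate is close to the segment $[\log r,\,\log r + 4\pi i]$, so that $f(W')$ is close to the circle of radius $r$. Your overall strategy (Proposition~\ref{propTransv} for the angle, \rlem{Notcontained} for the escape time, then two further iterates) is essentially the paper's; the only correction needed is to aim for a small specific angle rather than a vanishing one.
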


\begin{proof}
    Fix $\eps \in (0, 1/K)$ such that $\eps < r < \eps^{-1}$. 
    By \rprop{Transv}, we may assume that $\gamma$ intersects $T$ transversally at $\gamma(0)$ at an angle in $(2\eps - \eps^3, 2\eps + \eps^3)$.
    Let $J_n = [0,\delta_n)\subset [0,1]$ be such that the length of $f^n\circ \gamma (J_n)$ is $1/n$. 
        One can check, using \rlem{Elt}, that, for large $n$,
        $$\arg( (f^n \circ \gamma)'(J_n)) \subset (2\eps-2\eps^3, 2\eps + 2\eps^3) 
        $$
    and that 
    $$|\arg(f^{n+1}\circ \gamma (J_n))| < \frac1n < \eps^{-3}.$$
    Hence
    $$\arg( (f^{n+1} \circ \gamma)'(J_n)) \subset (2\eps-3\eps^3, 2\eps + 3\eps^3).
    $$
    The length of $f^{n+1} \circ \gamma(J_n)$ is at least $100^n /n > \eps^{-2}$ for large $n$. 
    Hence its imaginary part covers an interval of length $3\pi$, which guarantees the existence of a subarc $A \subset f^{n+1} \circ \gamma(J_n)$  containing a point $p$ with $\dist(p, \partial A) = 1/n$ and with the length of $A$ bounded by $3/n$, for which $f(p)$ lies on the positive imaginary axis. 
    The length bound controls the distortion of $f$ on $A$ and of $f^{n+1}$ on $J_n \cap f^{-n-1}(A)$.
    
Since the horizontal line passing through $p$ maps to the positive imaginary axis, 
$f(A)$ is almost vertical, with direction lying in $(\pi/2 + \eps, \pi/2 + 3\eps)$. Each component of $f(A\setminus \{p\})$ has length at least $100^n/n$ and so, for large $n$, the real part of $f(A)$ covers $(-n,-n)$. 
Hence some subarc $W'$ of $f(A)$ has a $2k\pi i$-translate which is $6\pi \eps$-close (in the Hausdorff metric) to the line segment joining $\log r$ and $4\pi i + \log r$. 
As $\eps>0$ was arbitrary, we can choose $W'$ so that  $f(W')$ is arbitrarily close to the circle of radius $r$.

Let $W$ be the subinterval of $J_n$ mapped by $f^{n+2}\circ\gamma$ into $A$ and thence by $f$ bijectively onto $W'$.
Noting that 
\[
    \sup_{w,z \in \gamma(W)}
    \frac{Df^{n+3}(w)}{Df^{n+3}(z)}  = 1 +o(1),
\]
$f^{n+4}\circ\gamma$ restricted to $W$, affinely reparametrised, is $\cC^1$-close to $\rho_r$.
\end{proof}

\begin{prop} \label{propcurvearcs}
    Let $\gamma : [0,1] \to S$ be a $\cC^1$ curve satisfying $\{0\}$ is a connected component of $\gamma^{-1} (T)$.
    Given $z \in \C \setminus \{0\}$ and $\eps >0$ there are four closed intervals $W_j \subset [0,1]$ and numbers $n_j \geq 1$, $j=1,\ldots, 4$, for which
    $$
    \bigcup_{j=1}^4 f^{n_j}\circ \gamma(W_j)$$
    is a (piecewise smooth) Jordan curve with diameter bounded by $\eps$ and containing $z$ in its interior region. 
    \end{prop}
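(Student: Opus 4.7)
\medskip
\noindent\textbf{Proof plan.} The strategy is to approximate the four circle arcs furnished by \rlem{Circles} by iterates of $\gamma$ via \rlem{CircleApprox}, and then verify that passing to $f^2$-images preserves the Jordan-curve property around $z$.

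First, I apply \rlem{Circles} to $z$ with parameter $\eps/2$ to obtain arcs $C_1,\ldots,C_4$ of circles centred at the origin, of respective radii $r_1,\ldots,r_4$, such that $\bigcup_j f^2(C_j)$ is a Jordan curve around $z$ of diameter at most $\eps/2$ whose pairwise intersections are transverse.

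Next, I apply \rlem{CircleApprox} inductively four times: having already chosen $W^*_1,\ldots,W^*_{i-1}$ with iterates $m_1,\ldots,m_{i-1}$, I restrict $\gamma$ to $[0,\tfrac12\inf W^*_{i-1}]$, reparametrised affinely to $[0,1]$. Since $\{0\}$ remains a connected component of the preimage of $T$, the lemma produces a closed interval $W^*_i$, disjoint from the previous ones, and an iterate $m_i$ such that $(f^{m_i}\circ\gamma)|_{W^*_i}$ is $\cC^1$-close, after affine reparametrisation, to $\rho_{r_i}$. Since $\rho_{r_i}$ traces the circle of radius $r_i$ twice, I then choose a subinterval $W'_i\subset W^*_i$ on which $f^{m_i}\circ\gamma$ is $\cC^1$-close to a parametrisation of $C_i$ itself.

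Setting $n_j := m_j + 2$, the arc $f^{n_j}\circ\gamma(W'_j)$ equals $f^2(f^{m_j}\circ\gamma(W'_j))$, and, by smoothness of $f^2$ on a neighbourhood of the compact set $\bigcup_j C_j$, is $\cC^1$-close to $f^2(C_j)$. Taking the initial approximations sufficiently close, persistence of transverse intersections under $\cC^1$-small perturbation ensures that the four arcs $f^{n_j}\circ\gamma(W'_j)$ pairwise intersect transversally at points near the transverse intersection points of the $f^2(C_j)$'s; the approximating Jordan curve then still surrounds $z$ and has diameter at most $\eps$. I finally define $W_j\subset W'_j$ to be the closed subinterval whose endpoints map under $f^{n_j}\circ\gamma$ to these two intersection points, so that $\bigcup_j f^{n_j}\circ\gamma(W_j)$ is the required piecewise-smooth Jordan curve.

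The most delicate bookkeeping step, and the main obstacle I anticipate, is this last one: confirming that the perturbed arcs intersect transversally at unique points inside the $W'_j$ and that the trimmed pieces glue up cyclically rather than producing a more complicated configuration. This is a standard transversality / implicit-function argument applied to the $\cC^1$-small perturbation guaranteed by \rlem{CircleApprox}, but must be set up with some care when ordering the four intersection points around the approximating Jordan curve.
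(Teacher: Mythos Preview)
Your proposal is correct and follows exactly the approach the paper intends: the paper's own proof is the single sentence ``This follows from \rlem{CircleApprox} and \rlem{Circles},'' and your write-up is a faithful unpacking of that combination, including the transversality-persistence step needed to trim the perturbed arcs into a genuine Jordan curve. The inductive restriction to $[0,\tfrac12\inf W^*_{i-1}]$ to force disjointness of the $W_j$ is not actually required by the statement, but it does no harm.
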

    \begin{proof}
        This follows from \rlem{CircleApprox} and \rlem{Circles}.
    \end{proof}


\begin{lem} \label{lemMont}
Let $H$ be a Jordan curve.
Suppose that the interior region bounded by $H$ contains points in the Julia set.
There exists $N\geq 1$ such that $f^N(H)$ intersects each connected component of $T$ at a point with real part strictly greater than $K+1$.

Furthermore, given $\eps>0$,  $H$ contains an arc $A$ for which $f^N(A) \subset S \cap \{z \colon \re(z) > K+1\}$, for which $f^N(A)$ has diameter bounded by $\eps$ and  for which $f^N(A)$ intersects uncountably many connected components of $T$.
\end{lem}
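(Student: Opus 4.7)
My plan breaks into two parts, with the same $N$ serving both.

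For the first assertion: let $D$ be the interior region of $H$; by hypothesis $D$ is bounded, open, connected, and meets the Julia set. Hence $\{f^n\}$ is not normal on $D$, and since the only omitted value of $\lambda e^z$ is $0$, the standard blow-up consequence of Montel's theorem gives, for any compact $L \subset \C \setminus \{0\}$, some $N$ with $f^N(D) \supset L$. I take $L$ to be an annulus $\{R \leq |z| \leq 2R\}$ with $R$ so large that $2R - 5\pi/2 > K+1$ (in fact, with extra margin, say $R > K + 10$). Since $f^N$ is an open map and $\overline{f^N(D)} \subset f^N(\overline D) = f^N(D) \cup f^N(H)$, we have $\partial f^N(D) \subset f^N(H)$. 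The complement of the bounded open set $f^N(D)$ has a unique unbounded component $E_\infty$; since the annulus lies in $f^N(D)$, $E_\infty \subset \{|z| > 2R\}$, so $\partial E_\infty \subset f^N(H) \cap \{|z| \geq 2R\}$. Any continuous path from $\{|z| < R\}$ to infinity must first enter $E_\infty$ at a point of $\partial E_\infty$. Each $T_a$ starts at real part $K$ (hence well inside $\{|z| < R\}$) and goes to infinity inside a horizontal strip of height at most $5\pi$, so it must meet $\partial E_\infty$; at any meeting point $p$ the bounds $|p| \geq 2R$ and $|\im p| \leq 5\pi/2$ force $\re p > K+1$.

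For the second assertion, I apply the first with $R$ big enough that every intersection point $q$ has $\re q \geq K+5$, say. For each $a \in \{0,1\}^\N$ I pick such a point $q_a \in f^N(H) \cap T_a$ and an $H$-preimage $y_a \in H$ of $q_a$. The $T_a$ are pairwise disjoint, so the $q_a$ are distinct, hence so are the $y_a$; thus $\{y_a\}$ is an uncountable subset of $H$. As $H$ is second countable, $\{y_a\}$ admits a condensation point $y^\ast \in H$, every neighbourhood of which contains uncountably many $y_a$. Since $T = \bigcap_n f^{-n}(S)$ is closed, continuity of $f^N$ gives $f^N(y^\ast) \in T$ with $\re f^N(y^\ast) \geq K+5$. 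Moreover $T \cap \{\re > K\}$ lies in $S^\circ$: a point on the horizontal boundary of $S$ is mapped by $f$ to the imaginary axis and so leaves $S$. Thus $f^N(y^\ast) \in S \cap \{\re > K+1\}$, which is open in $\C$. Uniform continuity of $f^N$ on $H$ now yields a sub-arc $A \subset H$ about $y^\ast$ with $f^N(A) \subset S \cap \{\re > K+1\}$ and $\diam f^N(A) < \eps$; the condensation property ensures $A$ still contains uncountably many $y_a$, so $f^N(A)$ meets uncountably many components $T_a$.

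The main technical obstacle is the topological step in the first part: translating the blow-up statement about the \emph{interior} $D$ into a genuine separation statement about the image of the \emph{boundary} $H$, despite $f^N|_H$ possibly being badly self-intersecting. The remaining ingredients---the blow-up lemma itself, the separation argument via the unbounded complementary component, and the condensation-point argument on $H$---are by comparison routine.
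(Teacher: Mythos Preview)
Your argument is correct. For the first assertion, both you and the paper rely on the Montel blow-up property; the paper routes it explicitly through a repelling periodic point in $D$ and a linearising neighbourhood $V\subset D$ with $V\subset f^k(V)$ (which is, in fact, the usual way one proves the single-$N$ covering statement you quote as ``standard''), and then leaves implicit the topological step that each $T_a$, running from small modulus to $\infty$, must cross $\partial f^N(D)\subset f^N(H)$. You make that step explicit via the unbounded complementary component, which is a nice touch. For the second assertion the paper takes a more combinatorial route: it chops $H$ into finitely many arcs on each of which $f^N$ is a homeomorphism with image of length $<\eps<1$, observes that $\dist(T\cap\{\re z>K+1\},\partial S)=1$ so any such arc hitting $T$ there has image inside $S\cap\{\re z>K\}$, and concludes by pigeonhole. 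Your condensation-point argument is equally valid and perhaps cleaner, since it produces the arc directly around a specific accumulation point rather than via counting. One small slip: $S$ is defined as a \emph{closed} set, so $S\cap\{\re z>K+1\}$ is not open; what you actually need, and have already established via $T\cap\{\re z>K\}\subset S^\circ$, is that $f^N(y^\ast)$ lies in the open set $S^\circ\cap\{\re z>K+1\}$, which is what lets you pick the arc $A$.
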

\begin{proof}
    Take $\cK$ to be a closed, vertical, line segment joining $K+1 - {3\pi} i$ with $K+1+{3\pi} i$. By construction, it crosses both connected components of $S$ and, hence, all connected components of $T$. Each connected component of $T$ is a curve which crosses $\cK$ and joins $\cK$ with $\infty$. 
    
    As periodic points are dense in the Julia set, there is a repelling periodic point of period $k$, say, in the interior region bounded by $H$. The periodic point has an open neighbourhood $V$ for which $V \subset f^k(V)$ and $V \cap H = \emptyset$.
By Montel's theorem,  there exists $N\geq 1$ for which $\cK \subset f^N(V)$. 
The first statement follows. 

Without loss of generality, we suppose that $\eps \in (0,1)$. 
        As $f^N$ is locally diffeomorphic, we can partition $H$ into a finite number of arcs on each of which $f^N$ is homeomorphic onto its image, and whose image has length bounded by $\eps$. 
        Verifying that $\dist(T \cap \{z : \re(z) > K+1\}, \partial S) = 1$ is straightforward; then  apply the Pigeon-Hole Principle to conclude. 
\end{proof}
\begin{rem} This falls just short of implying that $H$ contains a Cantor set of points mapped by $f^N$ into $T$. 
\end{rem}

\begin{cor} \label{corDich}
    Either $H$ contains an uncountable set of points where it is not smooth or there is a $\cC^1$ curve $\rho : [0,1] \to H$ for which $f^N\circ\rho([0,1]) \subset S$ and $\{0\}$ is a connected component of $(f^N\circ\rho)^{-1}(T)$. 
\end{cor}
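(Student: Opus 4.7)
The plan is to assume $H$ has at most countably many non-smooth points and produce the curve $\rho$. First I apply \rlem{Mont} to obtain an arc $A \subset H$ with $f^N(A) \subset S \cap \{z : \re(z) > K+1\}$ such that $f^N(A)$ meets uncountably many connected components of $T$. Set $E := A \cap (f^N)^{-1}(T)$; this is closed in $A$, and the identity $f^N(J \cap E) = f^N(J) \cap T$, valid for every sub-arc $J \subset A$, shows that the ``meets uncountably many components'' property is genuinely carried by $E$.

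Next I delete from $A$ the countably many points where $H$ is non-smooth, leaving an open subset $A'$ that decomposes as a countable union of open sub-arcs on each of which $H$ is smooth. The deleted points contribute only countably many components of $T$, so the pigeon-hole principle yields a single component $J$ of $A'$ for which $f^N(J \cap E)$ still meets uncountably many components of $T$. Since $f^N(J)$ is connected and any connected subset of $T$ lies inside a single $T_a$, this forces $J \not\subset E$, so $J \setminus E$ is a non-empty open subset of $J$; moreover $J \cap E \ne \emptyset$, for otherwise $f^N(J) \cap T$ would be empty.

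I then pick a connected component $I$ of $J \setminus E$. Since $I$ is a proper open subinterval of $J$, at least one endpoint of $I$ lies strictly inside $J$; by closedness of $E$ that endpoint $p$ belongs to $J \cap E$. Because $H$ is smooth throughout $J$, a neighbourhood of $p$ in $H$ admits a $\cC^1$ parametrisation, and after restricting and orienting this parametrisation towards $I$ I obtain $\rho : [0,1] \to H$ with $\rho(0) = p$, $\rho((0,1]) \subset I$, and $\rho([0,1]) \subset J \subset A$. This delivers $f^N\circ\rho([0,1]) \subset f^N(A) \subset S$ together with $(f^N\circ\rho)^{-1}(T) = \rho^{-1}(E) = \{0\}$, as required.

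The main obstacle is the topological bookkeeping of the middle paragraph: one must simultaneously guarantee that some \emph{smooth} sub-arc $J$ of $A$ still has $f^N(J \cap E)$ meeting uncountably many components of $T$, and that $J \cap E$ is a proper, non-empty, closed subset of $J$ accessible one-sidedly from $J \setminus E$. The combination of the pigeon-hole estimate on countable removals with the connectedness of $f^N(J)$ (and the fact that connected subsets of $T$ lie in a single $T_a$) settles both points in one move.
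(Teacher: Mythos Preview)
Your argument is correct and rests on the same two ingredients as the paper's proof: the arc $A$ from \rlem{Mont} meeting uncountably many $T_a$, and the fact that a connected subset of $T$ lies in a single $T_a$. The organisation differs, however. Rather than deleting the non-smooth points and pigeon-holing onto a smooth sub-arc, the paper orients $A$ and, for each $a$ with $f^N(A)\cap T_a\ne\emptyset$, takes the \emph{rightmost} point $s_a\in A$ with $f^N(s_a)\in T_a$; disjointness of the $T_a$ makes the $s_a$ pairwise distinct, so either all are non-smooth (first alternative) or some $s_{a_0}$ is smooth, in which case one parametrises $\rho$ from $s_{a_0}$ heading rightward and the rightmost property together with connectedness forces $\{0\}$ to be a component of $(f^N\circ\rho)^{-1}(T)$. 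Your route is slightly longer but has the minor bonus of giving $(f^N\circ\rho)^{-1}(T)=\{0\}$ exactly, not merely as a connected component.
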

    \begin{proof}
        From \rlem{Mont}, we have an uncountable set 
        $$\cB := 
        \{ a \in \{0,1\}^\N \colon f^N(A\setminus \partial A) \cap T_a \ne \emptyset. \}.$$ 
        We can orient $A$ by embedding it in $\R$; then for $a \in \cB$, there is a rightmost point $s_a$ in $A$ for which $f^N(s_a) \in T_a$. 
        It follows that either $H$ is not smooth at $s_a$ for any $a \in \cB$, or there is a differentiable curve $\rho$ as required. 
    \end{proof}

        The following two corollaries are immediate consequences of \rcor{Dich} and \rprop{curvearcs}.

    \begin{cor}
        \rmthm{Dicho} holds.
    \end{cor}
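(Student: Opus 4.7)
The plan is to deduce the dichotomy directly from \rcor{Dich} and \rprop{curvearcs}, as the author signals. Let $H$ be a Jordan curve surrounding a point of the Julia set. First, I would apply \rcor{Dich} to $H$. It offers two alternatives: either $H$ has an uncountable set of non-smooth points, in which case clause (2) of \rmthm{Dicho} is immediate and we are done, or else there is a $\cC^1$ curve $\rho : [0,1] \to H$ and an integer $N \geq 1$ such that $\gamma := f^N \circ \rho$ maps into $S$ with $\{0\}$ a connected component of $\gamma^{-1}(T)$.

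In the second case, $\gamma$ satisfies precisely the hypothesis of \rprop{curvearcs}. Fixing any $z \in \C \setminus \{0\}$ and any $\eps > 0$, that proposition produces closed intervals $W_j \subset [0,1]$ and integers $n_j \geq 1$ ($j=1,\ldots,4$) such that
\[
    \bigcup_{j=1}^{4} f^{n_j}\bigl(\gamma(W_j)\bigr)
\]
is a piecewise-smooth Jordan curve of diameter at most $\eps$ with $z$ in its interior region. In particular this curve meets $B(z,\eps)$.

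Since $\gamma = f^N \circ \rho$ and $\rho([0,1]) \subset H$, each piece $f^{n_j}(\gamma(W_j))$ lies in $f^{n_j+N}(H)$, so the full forward orbit $\bigcup_{n \geq 0} f^n(H)$ intersects every $B(z,\eps)$ with $z \ne 0$. Because $\C \setminus \{0\}$ is dense in $\C$, this yields clause (1).

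I do not anticipate any genuine obstacle: the substantive work has already been done in \rprop{Transv}, \rlem{CircleApprox} and \rprop{curvearcs}, and what remains is purely a matter of wiring together the outputs. The only minor point to keep straight is that \rprop{curvearcs} approximates around an arbitrary \emph{non-zero} target, but density of $\C \setminus \{0\}$ in $\C$ removes the exclusion at once.
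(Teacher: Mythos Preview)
Your argument is correct and is exactly the route the paper intends: the author states the corollary follows immediately from \rcor{Dich} and \rprop{curvearcs}, and you have simply spelled out that immediate deduction. The only detail worth making explicit is that a Jordan curve of diameter at most $\eps$ surrounding $z$ is necessarily contained in $\overline{B(z,\eps)}$, which justifies the claim that the orbit of $H$ meets every such ball; otherwise nothing is missing.
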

\begin{cor} \label{corIndStep}
    Given $z \in \ccc\setminus\{0\}$, $\eps>0$ and a piecewise smooth Jordan curve $H$ 
    with a point of the Julia set in its interior region, there are four closed subarcs $W_1, \ldots, W_4$ of $H$ and numbers $n_1,\ldots, n_4 \geq 1$ such that the union of $f^{n_j} \circ h_{|W_j}$ is a piecewise smooth Jordan curve with diameter bounded by $\eps$ and containing $z$ in its interior region. 
\end{cor}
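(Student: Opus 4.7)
The plan is to combine \rcor{Dich} with \rprop{curvearcs} in the natural way, using piecewise smoothness of $H$ to eliminate the first alternative of the dichotomy.

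First I would apply \rcor{Dich} to $H$. By hypothesis $H$ is piecewise smooth, so by definition the set of points at which $H$ fails to be smooth is \emph{finite}, in particular not uncountable. Hence the first alternative of \rcor{Dich} cannot occur, so the second alternative does: there exists $N \geq 1$ and a $\cC^1$ curve $\rho : [0,1] \to H$ such that $\gamma := f^N \circ \rho$ satisfies $\gamma([0,1]) \subset S$ and $\{0\}$ is a connected component of $\gamma^{-1}(T)$. (Strictly, \rcor{Dich} produces such a $\rho$ on some reparametrisation; this causes no issue.)

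Next I would feed $\gamma$ together with the given $z$ and $\eps$ into \rprop{curvearcs}. That proposition yields closed intervals $W_j' \subset [0,1]$ and integers $m_j \geq 1$, for $j = 1, \ldots, 4$, such that
\[
\bigcup_{j=1}^4 f^{m_j} \circ \gamma(W_j') = \bigcup_{j=1}^4 f^{m_j + N} \circ \rho(W_j')
\]
is a piecewise smooth Jordan curve of diameter at most $\eps$ whose interior region contains $z$. Setting $n_j := m_j + N$ and letting $W_j \subset H$ be the subarc corresponding to $\rho(W_j')$ (under the parametrisation $h$ of $H$) gives the conclusion of the corollary.

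There is no substantive obstacle; the only thing to check is the bookkeeping of iterates (adding the $N$ from \rcor{Dich} to each $m_j$ from \rprop{curvearcs}) and the observation that ``piecewise smooth'' in the hypothesis on $H$ rules out the first branch of the dichotomy by a trivial cardinality argument.
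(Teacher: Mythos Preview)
Your proposal is correct and is exactly the argument the paper has in mind: the paper states that \rcor{IndStep} is an ``immediate consequence of \rcor{Dich} and \rprop{curvearcs},'' and your write-up simply spells out that immediate consequence (piecewise smoothness kills the first alternative of the dichotomy, then compose the resulting $\rho$ with $f^N$ and feed it into \rprop{curvearcs}, adding $N$ to each iterate count).
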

    When the Julia set is the whole complex plane, and noting that the intersection of a decreasing sequence of closed sets is non-empty, we can repeatedly apply \rcor{IndStep} for a dense sequence $(z_n)_n$ of (non-zero) points and a sequence $(\eps_n)_n$ tending to $0^+$ to obtain:
    \begin{cor}
        \rmthm{Point} holds. 
    \end{cor}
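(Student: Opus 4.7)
The plan is to iterate \rcor{IndStep} and extract a point by a compactness argument on $H$.

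First, I would fix a countable dense sequence $(z_n)_{n\geq 1} \subset \C \setminus \{0\}$ and a sequence $\eps_n \downarrow 0$. Set $H_0 := H$, and inductively apply \rcor{IndStep} to $H_{n-1}$ with target $z_n$ and tolerance $\eps_n$ to obtain a piecewise smooth Jordan curve
\[
H_n = \bigcup_{j=1}^4 f^{m_j^{(n)}}(A_j^{(n)}),
\]
where $A_j^{(n)} \subset H_{n-1}$ are four closed subarcs, $m_j^{(n)} \geq 1$, $\diam(H_n) < \eps_n$, and $z_n$ lies in the interior region of $H_n$. Since the Julia set is all of $\C$, the interior of each $H_{n-1}$ contains Julia points, so the corollary applies at every stage.

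Next, I would pull the tower back to $H_0$ by bookkeeping. Let $K_n \subset H_0$ be the set of $p_0$ admitting an \emph{itinerary}: a choice of indices $(j_1,\ldots,j_n) \in \{1,\ldots,4\}^n$ and points $p_0, p_1, \ldots, p_n$ with $p_{k-1} \in A_{j_k}^{(k)}$ and $p_k = f^{m_{j_k}^{(k)}}(p_{k-1})$ for each $1 \leq k \leq n$. Since each $f^{m_j^{(k)}}$ is a homeomorphism on $A_j^{(k)}$ onto its image in $H_k$, $K_n$ is a finite union of closed subarcs of $H_0$, hence closed; truncating itineraries shows $K_{n+1} \subset K_n$; and $K_n \neq \emptyset$ because any point of $H_n$ can be pulled back through the tower to a point of $H_0$.

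Finally, by compactness of $H_0$, $\bigcap_n K_n \neq \emptyset$; fix $p_0$ in this intersection. For each $n$ there is $N_n := \sum_{k=1}^n m_{j_k}^{(k)} \geq n$ with $f^{N_n}(p_0) \in H_n$, and since $z_n$ lies in the interior region of the Jordan curve $H_n$ we have $|f^{N_n}(p_0) - z_n| \leq \diam(H_n) < \eps_n$. As $(z_n)$ is dense in $\C \setminus \{0\}$, hence in $\C$, and $\eps_n \to 0$, the forward orbit of $p_0$ is dense in $\C$. The only step requiring care is the bookkeeping defining $K_n$ and the verification of its closedness and non-emptiness; the genuinely hard geometric work has already been done by \rcor{IndStep}.
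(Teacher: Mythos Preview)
Your argument is correct and is exactly the paper's approach with the bookkeeping spelled out: the paper's proof is the single sentence that one repeatedly applies \rcor{IndStep} along a dense sequence $(z_n)$ with $\eps_n \to 0^+$ and uses that a nested sequence of non-empty closed subsets of the compact curve $H$ has non-empty intersection. One minor quibble: the assertion that each $f^{m_j^{(k)}}$ is a homeomorphism on $A_j^{(k)}$ is neither needed nor justified, but closedness of $K_n$ follows from continuity alone and non-emptiness from the pullback argument, so the proof stands.
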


\bibliography{references}
\bibliographystyle{plain}
\end{document}